\newcommand{\Real}{\mathbb R}
\newcommand{\Natural}{\mathbb N}
\newcommand{\Expected}[1]{\mathbf{E}\left[#1\right]}
\newcommand{\Trace}[1]{\mathbf{Tr}\left[#1\right]}
\renewcommand{\v}[1]{\mathbf{#1}}
\newtheorem{proposition}{Proposition}
\newtheorem{definition}{Definition}
\newcommand{\eqnref}[1]{eqn.(\ref{#1})}
\newcommand{\Fig}[1]{Figure (\ref{#1})}
\newenvironment{proof}[1][Proof]{\begin{trivlist}
\item[\hskip \labelsep {\bfseries #1}]}{\end{trivlist}}
\begin{document}

\begin{frontmatter}

\title{Linear Receding Horizon Control with Probabilistic System Parameters} 

\author[First]{Raktim Bhattacharya} 
\author[Second]{James Fisher} 

\address[First]{Aerospace Engineering, Texas A\&M University, USA.}                                              
\address[Second]{Raytheon Missile Systems.}


\begin{abstract}                          
In this paper we address the problem of designing receding horizon control algorithms for linear discrete-time systems with parametric uncertainty. We do not consider presence of stochastic forcing or process noise in the system. It is assumed that the parametric uncertainty is probabilistic in nature with known probability density functions. We use generalized polynomial chaos theory to design the proposed stochastic receding horizon control algorithms. In this framework, the stochastic problem is converted to a deterministic problem in higher dimensional space. The performance of the proposed receding horizon control algorithms is assessed using a linear model with two states. 
\end{abstract}

\end{frontmatter}

\section{Introduction}
Receding horizon control (RHC), also known as model predictive control (MPC), has been popular in the process control industry for several years \cite{MPC_Overview,MPCSurvey}, and recently gaining popularity in aerospace applications, see  \cite{raktimf16}. It is based on the idea of repetitive solution of an optimal control problem and updating states with the first input of the optimal command sequence. The repetitive nature of the algorithm results in a state dependent feedback control law. The attractive aspect of this method is the ability to incorporate state and control limits as constraints in the optimization formulation. When the model is linear, the optimization problem is quadratic if the performance index is expressed via a $\mathcal{L}_2 $-norm, or linear if expressed via a $\mathcal{L}_1/\mathcal{L}_\infty$-norm. Issues regarding feasibility of online computation, stability and performance are largely understood for linear systems and can be found in refs. \cite{kwon94,rhcbook1}. For nonlinear systems, stability of RHC methods is guaranteed by \cite{Primbs-Thesis,Ali_ACC99}, by using an appropriate control Lyapunov function . For a survey of the state-of-the-art in nonlinear receding horizon control problems the reader is directed to \cite{NLRHC_survey}.

Traditional RHC laws perform best when modeling error is small. \cite{myrhc} has shown that system uncertainty can lead to significant oscillatory behavior and possibly instabilty. Furthermore, \cite{rhcnonrobust} showed that in the presence of modeling uncertainty RHC strategy may not be robust with RHC designs. Many approaches have been taken to improve robustness of RHC strategy in the presence of unknown disturbances and bounded uncertainty, see work of \cite{rakovic, rhcworstcase, rhcefficient, mayne2000cmp}. These approaches involve the computation of a feedback gain to ensure robustness. The difficulty with this approach is that, even for linear systems, the problem becomes difficult to solve, as the unknown feedback gain transforms the quadratic programming problem into a nonlinear programming problem. 

In this paper we address the problem of RHC design for linear systems with probabilistic uncertainty in system parameters. Parametric uncertainty arises in systems when the physics governing the system is known and the system parameters are either not known precisely or are expected to vary in the operational lifetime. Such uncertainty also occurs when system models are build from experimental data using system identification techniques. As a result of experimental measurements, the values of the parameters in the system model have a range of variations with quantifiable likelihood of occurrence.  In either case, the range of variation of these parameters and the likelihood of their occurrence are assumed to be known and it is desired to design controllers that achieve specified performance for these variations.

While the area of robust RHC is not new, approaching the problem from a stochastic standpoint is only recently receiving attention, for example \cite{hessem1,Batina}. These approaches however suffered from  either computational complexity, high degree of conservativeness or do not address closed-loop stability. The key difficulty in stochastic RHC is the propagation of uncertainty over the prediction horizon. More recently, \cite{Cannon2009167} avoid this difficulty by using an autonomous augmented formulation of the prediction dynamics. Constraint satisfaction and stability is achieved in \cite{Cannon2009167} by extending ellipsoid invariance theory to invariance with a given probability. The cost function minimized was the expected value of a quadratic function of random state and control trajectories. Additionally, the uncertainty in the system parameters were assumed to have normal distribution. 

This paper presents formulation of robust RHC design problems in polynomial chaos framework, where parametric uncertainty can be governed by any probability density function. In this approach the solution, not the dynamics, of the random process is approximated using a series expansion. It is assumed that the random process to be controlled has finite second moment, which is the assumption of the polynomial chaos framework. The polynomial chaos based approach predicts the propagation of uncertainty more accurately, is computationally cheaper than methods based on Monte-Carlo or series approximation of the dynamics, and is less conservative than the invariance based methods. 

The paper is organized as follows. We first present a brief introduction to polynomial chaos and its application in transforming linear stochastic dynamics to linear deterministic dynamics in higher dimensional state-space. Next stability of stochastic linear dynamics in the polynomial chaos framework is presented. This is followed by formulation of RHC design for discrete-time stochastic linear systems. Stability of the proposed RHC algorithm is then analyzed. The paper concludes with numerical examples that assesses the performance of the proposed method. 

\section{Background on Polynomial Chaos}
Recently, use of polynomial chaos to study stochastic differential equations is gaining popularity. It is a non-sampling based method to determine evolution of uncertainty in dynamical system, when there is probabilistic uncertainty in the system parameters. Polynomial chaos was first introduced by \cite{wienerPC}
where Hermite polynomials were used to model stochastic processes
with Gaussian random variables. It can be thought of as an extension of Volterra's theory of nonlinear functionals \cite{volterra} for stochastic systems \cite{pcFEM}. According to \cite{CameronMartin} such an expansion converges in the
$\mathcal{L}_2$ sense for any arbitrary stochastic process with
finite second moment. This applies to most physical systems. \cite{Xiu} generalized the result of Cameron-Martin to various
continuous and discrete distributions using orthogonal polynomials
from the so called Askey-scheme \cite{Askey-Polynomials} and
demonstrated $\mathcal{L}_2$ convergence in the corresponding Hilbert
functional space. This is popularly known as the generalized
polynomial chaos (gPC) framework. The gPC framework has been applied to applications including stochastic fluid dynamics \cite{pcFluids2},stochastic finite elements \cite{pcFEM}, and solid mechanics
\cite{pcSolids1}. It has been shown in \cite{Xiu} that gPC based methods are computationally far superior than Monte-Carlo based methods. However, application of gPC to control related problems has been surprisingly limited and is only recently gaining popularity. See \cite{vinh-JGCD, fisher2008sld, pctrajgen} for control related application of gPC theory.

\subsection{Wiener-Askey Polynomial Chaos}
Let $(\Omega,\mathcal{F},P)$ be a probability space, where $\Omega$
is the sample space, $\mathcal{F}$ is the $\sigma$-algebra of the
subsets of $\Omega$, and $P$ is the probability measure. Let
$\Delta(\omega) =
(\Delta_1(\omega),\cdots,\Delta_d(\omega)):(\Omega,\mathcal{F})\rightarrow(\Real^d,\mathcal{B}^d)$
be an $\Real^d$-valued continuous random variable, where
$d\in\Natural$, and $\mathcal{B}^d$ is the $\sigma$-algebra of Borel
subsets of $\Real^d$. A general second order process $X(\omega)\in
\mathcal{L}_2(\Omega,\mathcal{F},P)$ can be expressed by polynomial
chaos as
\begin{equation}
\label{eqn.gPC}
X(\omega) = \sum_{i=0}^{\infty} x_i\phi_i({\Delta}(\omega)),
\end{equation}
where $\omega$ is the random event and $\phi_i({\Delta}(\omega))$
denotes the gPC basis of degree $p$ in terms of the random variables
$\Delta(\omega)$. The functions $\{\phi_i\}$ are a family of
orthogonal basis in $\mathcal{L}_2(\Omega,\mathcal{F},P)$ satisfying
the relation
\begin{equation}
\langle \phi_i \phi_j \rangle := \int_{\mathcal{D}_{\Delta(\omega)}}{\phi_i\phi_jw(\Delta(\omega))
\,d\Delta(\omega)}=h_i^2\delta_{ij}
\end{equation}
where $\delta_{ij}$ is the Kronecker delta, $h_i$ is a constant
term corresponding to $\int_{\mathcal{D}_{\Delta}}{\phi_i^2w(\Delta)\,d\Delta}$,
$\mathcal{D}_{\Delta}$ is the domain of the random variable $\Delta(\omega)$, and
$w(\Delta)$ is a weighting function. Henceforth, we will use $\Delta$ to
represent $\Delta(\omega)$.
For random variables $\Delta$ with certain distributions, the family
of orthogonal basis functions $\{\phi_i\}$ can be chosen in such a
way that its weight function has the same form as the probability
density function $f(\Delta)$.  When these types of polynomials are chosen,
we have $f(\Delta)=w(\Delta)$ and
\begin{equation}
\int_{\mathcal{D}_{\Delta}}{\phi_i\phi_jf(\Delta)\,d\Delta}=
\Expected{\phi_i\phi_j} = \Expected{\phi_i^2}\delta_{ij},
\end{equation}
where $\Expected{\cdot}$
denotes the expectation with respect to the probability measure
$dP(\Delta(\omega))=f(\Delta(\omega))d\Delta(\omega)$ and probability density
function $f(\Delta(\omega))$.
The orthogonal polynomials that are chosen are the
members of the Askey-scheme of polynomials (\cite{Askey-Polynomials}),
which forms a complete basis in the Hilbert space determined by
their corresponding support. Table \ref{table.pc} summarizes the
correspondence between the choice of polynomials for a given distribution
of $\Delta$. See \cite{Xiu} for more details.

\begin{table}[htbp]
\centering
\begin{tabular}{|c|c|}
\hline
Random Variable $\Delta$ & $\phi_i(\Delta)$ of the Wiener-Askey Scheme\\ \hline
Gaussian & Hermite \\
Uniform  & Legendre \\
Gamma   & Laguerre \\
Beta    & Jacobi\\\hline
\end{tabular}
\caption{Correspondence between choice of polynomials and given
distribution of $\Delta(\omega)$ \cite{Xiu}.} \label{table.pc}
\end{table}

\subsection{Approximation of Stochastic Linear Dynamics Using Polynomial Chaos Expansions}
Here we derive a generalized representation of the deterministic dynamics obtained from the stochastic system by approximating the solution with polynomial chaos expansions. 

Define a linear discete-time stochastic system in the following manner
\begin{equation}
x(k+1,\Delta)=A(\Delta)x(k,\Delta) + B(\Delta)u(k,\Delta), \label{eqn:lti}
\end{equation}
where $x\in\Real^n, u\in\Real^m$. The system has probabilistic
uncertainty in the system parameters, characterized by $A(\Delta),
B(\Delta)$, which are matrix functions of random variable
$\Delta\equiv\Delta(\omega)\in\Real^d$ with certain
\textit{stationary} distributions. Due to the stochastic nature of
$(A,B)$, the system trajectory $x(k,\Delta)$ will also be stochastic.

By applying the Wiener-Askey gPC expansion of finite order to $x(k,\Delta),
A(\Delta)$ and $B(\Delta)$, we get the following approximations,
\begin{eqnarray}
\hat{x}(k,\Delta) &=& \sum_{i=0}^{p}x_i(k)\phi_i(\Delta),\, x_i(k)\in\Real^n  \label{eqn:gPC.x}\\
\hat{u}(k,\Delta) &=& \sum_{i=0}^{p} u_{i}(k)\phi_i(\Delta),\, u_i(k)\in\Real^m\label{eqn:gPC.u}\\
\hat{A}(\Delta) &=& \sum_{i=0}^{p}A_{i}\phi_i(\Delta),\, A_i = \frac{\langle A(\Delta),\phi_i(\Delta) \rangle}{\langle \phi_i(\Delta)^2 \rangle}\in\Real^{n\times n}\\
\hat{B}(\Delta) &=& \sum_{i=0}^{p}B_{i}\phi_i(\Delta), \, B_i = \frac{\langle B(\Delta),\phi_i(\Delta) \rangle}{\langle \phi_i(\Delta)^2 \rangle}\in\Real^{n\times m}.
\end{eqnarray}
The inner product or ensemble average $\langle\cdot,\cdot \rangle$, used in the above equations and in the rest of the paper, utilizes the weighting function associated with the assumed probability distribution, as listed in table~\ref{table.pc}.

The number of terms $p$ is determined by the dimension $d$ of $\Delta$ and the order $r$ of the orthogonal polynomials $\{\phi_k\}$, satisfying $p+1 = \frac{(d+r)!}{d!r!}$. The $n(p+1)$ time varying coefficients, $\{x_{i}(k)\}; k=0,\cdots,p$, are obtained by substituting the
approximated solution in the governing equation (eqn.(\ref{eqn:lti})) and conducting Galerkin projection on the basis functions $\{\phi_k\}_{k=0}^p$, to yield $n(p+1)$ \textit{deterministic} linear system of equations, which given by
\begin{equation}
\label{eq:pcrhcdyn}
\v{X}(k+1) = \v{A}\v{X}(k) + \v{B}\v{U}(k),
\end{equation}
where
\begin{eqnarray}
\v{X}(k) &=& [x_0(k)^T\; x_1(k)^T \;\cdots x_p(k)^T]^T, \label{eqn:Xdef}\\
\v{U}(k) &=& [u_0(k)^T\; u_1(k)^T \;\cdots u_p(k)^T]^T. \label{eqn:Udef}\\
\end{eqnarray}
Matrices $\mathbf{A}\in\Real^{n(p+1)\times n(p+1)}$ and $\mathbf{B}\in\Real^{n(p+1)\times m}$ are defined as
\begin{eqnarray}
\v{A}  &=&  (W\otimes I_n)^{-1}\left[
\begin{array}{c} 
H_A(E_0\otimes I_n)\\ \vdots \\ H_A(E_p\otimes I_n) 
\end{array}\right],\\
\v{B} &=& (W\otimes I_n)^{-1}\left[
\begin{array}{c} 
H_B(E_0\otimes I_m)\\ \vdots \\ H_B(E_p\otimes I_m) 
\end{array}\right],
\end{eqnarray}
where $H_A  =  \left[ A_0 \, \cdots \, A_p \right]$,
$H_B  =  \left[ B_0 \, \cdots \, B_p \right]$, $W = diag(\langle \phi_0^2 \rangle, \cdots, \langle \phi_p^2 \rangle )$, and  
\[E_i  = \left[\begin{array}{ccc} \langle \phi_i,\phi_0,\phi_0 \rangle & \cdots & \langle \phi_i,\phi_0,\phi_p \rangle \\ \vdots & & \vdots \\ \langle \phi_i,\phi_p,\phi_0 \rangle & \cdots & \langle \phi_i,\phi_p,\phi_p \rangle
\end{array}\right],
\]
with $I_n$ and $I_m$ as the identity matrix of dimension $n\times n$ and $m\times m$ respectively. It can be easily shown that $\Expected{x(k)} = x_0(k)$, or $\Expected{x(k)} = \left[I_n \; 0_{n\times np}\right]\v{X}(k).$

Therefore, transformation of a stochastic
linear system with $x\in\Real^n,u\in\Real^m$, with $p^{th}$ order
gPC expansion, results in a \textit{deterministic} linear system
with increased dimensionality equal to $n(p+1)$.

\section{Stochastic Receding Horizon Control}
Here we develop a RHC methodology for stochastic linear systems similar to that developed for deterministic systems, presented by \cite{goodwin2005cca}. Let $x(k,\Delta)$ be the solution of the system in \eqnref{eqn:lti} with control $u(k,\Delta)$. Consider the following optimal control problem defined by,
\begin{align}
\label{eq:rhc1}
&V_N^*= \min\,V_N(\{x(k+1,\Delta)\},\{u(k,\Delta)\}) \\ \nonumber
&{\rm subject\,to:}\\
&x(k+1,\Delta)=A(\Delta)x(k,\Delta)+B(\Delta)u(k,\Delta),\\
&\textrm{Initial Condition: }x(0,\Delta);\\
&\mu(u(k,\Delta))\in\mathbb{U}\subset \Real^{m},\\
&\mu(x(k,\Delta))\in\mathbb{X}\subset \Real^{n},\\
&\mu(x(N,\Delta))\in\mathbb{X}_f\subset \mathbb{X},
\end{align}
for $k=0,\cdots,N-1$; where $N$ is the horizon length, $\mathbb{U}$ and $\mathbb{X}$ are feasible sets for
$u(k,\Delta)$ and $x(k,\Delta)$ with respect to control and state constraints. $\mu(\cdot)$ represents moments based constraints on state and control.The set
$\mathbb{X}_f$ is a terminal constraint set. The cost function $V_N$ is given by

\begin{equation}
\begin{split}
& V_N = \sum_{k=1}^{N} \mathbf{E}\left[ x^T(k,\Delta)Qx(k,\Delta) + \right. \\
& \left. u^T(k-1,\Delta)Ru(k-1,\Delta) \right] + C_f(x(N),\Delta), 
\end{split}
\label{eqn:V_N}
\end{equation}

where $C_f(x(N),\Delta)$ is a terminal cost function, and $Q=Q^T>0$, $R=R^T>0$ are matrices with appropriate dimensions.

\subsection{Control Structure}
Here we consider the control structure,  
\begin{equation}
u(k,\Delta) =  \bar{u}(k)+K(k)\left(x(k,\Delta)-\Expected{x(k,\Delta)}\right), \label{eqn:rhclaw2}
\end{equation}
where $\bar{u}(k)$, and $K(k)$ are unknown \textit{deterministic} quantities. This is similar to that proposed by Primbs \etal  \cite{primbs2009stochastic} and enables us to regulate the mean trajectory using open loop control and deviations about the mean using a state-feedback control. 

In terms of gPC coefficients, the system dynamics in \eqnref{eqn:lti} with the first control structure is given by \eqnref{eq:pcrhcdyn}. The system dynamics in term of the gPC expansions, with the second control structure is given by 
\begin{equation}
\v{X}(k+1)=(\v{A}+\v{B} (\v{M} \otimes K(k)))\v{X}(k)+\v{B}\bar{U}(k), \label{eqn:gPCDyn1}
\end{equation}
where $\bar{U}(k)= [1\,\,0_{1\times p}]^T \otimes \bar{u}(k)$ and $\v{M} = \left[\begin{array}{cc} 0 & 0_{1\times p} \\ 0_{p\times 1} & I_{p\times p}
 \end{array}\right]$.
 
\subsection{Cost Functions}
Here we derive the cost function in \eqnref{eqn:V_N} is derived in terms of the gPC coefficients $\v{X}$ and $\v{U}$. For scalar $x$, the quantity $\Expected{x^2}$ in terms of its gPC expansions is given by
\begin{equation}
\Expected{x^2} = \sum_{i=0}^{p}\sum_{j=0}^{p} x_ix_j\int_{\mathcal{D}_\Delta}\phi_i \phi_j f d\Delta
 = \mathbf{x}^TW\mathbf{x},
\end{equation}
where $\mathcal{D}_\Delta$ is the domain of $\Delta$ , $x_i$ are the
gPC expansions of $x$, $f\equiv f(\Delta)$ is the probability
distribution of $\Delta$. Here we use the notation $\v{x}$ to represent the gPC state vector for scalar $x$. The expression
$\Expected{x^2}$ can be generalized for $x\in\Real^n$ where
$\Expected{x^T Q x}$ is given by
\begin{equation}
\Expected{x^T Q x} = \mathbf{X}^T(W \otimes Q)\mathbf{X}.
\end{equation}

The expression for the cost function in \eqnref{eqn:V_N}, in terms of gPC states and control is
\begin{equation}
\begin{split}
&V_N =  \sum_{k=0}^{N-1} [\v{X}^T(k)\bar{Q}\v{X}(k) + \\
& (\bar{U}^T(k)+\v{X}^T(k)(\v{M} \otimes K^T(k)))\bar{R} (\bar{U}(k) + \\
& (\v{M} \otimes K(k)))\v{X}(k))] + C_f(x(N),\Delta),
\end{split}
\end{equation}
where $\bar{Q}=W\otimes Q$ and $\bar{R}=W\otimes R$.

In deterministic RHC, the terminal cost is the cost-to-go from the terminal state to the origin by the local controller \cite{goodwin2005cca}. In the stochastic setting, a local controller can be synthesized using methods presented in our previous work \cite{fisher2008sld}. The cost-to-go from a given stochastic state variable $x(N,\Delta)$ can then be written as 
\begin{equation}
C_f(x(N),\Delta) = \v{X}^T(N)P\v{X}(N), \label{eqn:term-cost}
\end{equation}
where $\v{X}(N)$ are gPC states corresponding to $x(N,\Delta)$ and $P=P^T>0$ is a $n(p+1)\times n(p+1)$-dimensional matrix, obtained from the synthesis of the terminal control law \cite{fisher2008sld}. In the current stochastic RHC literature, the terminal cost function has been defined on the expected value of the final state \cite{lee1998ofc,delapenad2005spa,primbs2009stochastic,bertsekas2005dpa} or using a combination of mean and variance \cite{darlington2000decreasing,nagy2003robust}. The terminal cost function in \eqnref{eqn:term-cost} is more general than the terminal cost functions used in the literature because it penalizes all the moments of the random variable $x(N,\Delta)$, as they are functions of $\v{X}(N)$. This can be shown as follows.

To avoid tensor notation and without loss of generality, we consider $x(k,\Delta)\in \Real$ and let $\v{X}(k) = [x_0(k), x_1(k), \cdots, x_{p}(k)]^T$ be the gPC expansion of $x(k,\Delta)$. The $p^{th}$ moment in terms of $x_i(k)$ are then given by
\begin{equation}
\begin{split}
& m_p(k) = \sum_{i_1=0}^P \cdots \sum_{{i_p}=0}^P x_{i_1}(k) \cdots x_{i_p}(k) \int_{\mathcal{D}_\Delta} \phi_{i_1}(\Delta) \cdots \\
& \phi_{i_p}(\Delta)f(\Delta)d\Delta. 
\end{split}
\label{eqn:mp}
\end{equation}

Thus, minimizing $C_f(x(N),\Delta)$ in \eqnref{eqn:term-cost} minimizes all moments of $x(N,\Delta)$. Consequently, constraining the probability density function of $x(N,\Delta)$. 

\subsection{State and Control Constraints}
In this section we present the state and control constraints for the receding
horizon policy.
\subsubsection{Expectation Based Constraints}
Here we first consider constraints of the following form,
\begin{eqnarray}
\Expected{x(k,\Delta)^T H_{x} x(k,\Delta) + G_x x(k,\Delta)} & \le & \alpha_{i,x},\label{eqn:xMeanConstr}\\
\Expected{u(k,\Delta)^T H_{u} u(k,\Delta) + G_u u(k,\Delta)} & \le & \alpha_{i,u}, \label{eqn:uMeanConstr}
\end{eqnarray}
for $k=0\ldots N$.
These constraints are on the {\it expected value} of the quadratic functions.  Thus, instead of requiring
that the constraint be met for all trajectories, they instead imply that the constraints should be satisfied on average.
These constraints can be expressed in terms of the gPC states as 
\begin{eqnarray}
\v{X}(k)^T \bar{H}_{x} \v{X}(k) + \bar{G}_x \v{X}(k) & \le & \alpha_{i,x},\label{eqn:xpcMeanConstr}\\
\v{U}(k)^T \bar{H}_{u} \v{U}(k) + \bar{G}_u \v{U}(k) & \le & \alpha_{i,u},\label{eqn:upcMeanConstr}
\end{eqnarray}
where $\bar{H}_{x}=W \otimes H_{x}$, $\bar{H}_{u}=W \otimes H_{u}$, $\bar{G}_{x} =  G_{x}\left[I_n \; 0_{n\times np}\right]$, and $\bar{G}_{u} =  G_{u}\left[I_n \; 0_{n\times np}\right]$.

\subsubsection{Variance Based Constraints}
In many practical applications, it may be desirable to constrain the
second moment of the state trajectories, either at each time step or at final time. One means of achieving this
is to use a constraint of the form
\begin{equation}
\Trace{\Expected{(x(k)-\Expected{x(k)})(x(k)-\Expected{x(k)})^T}}\le \alpha_{\sigma^2}.
\end{equation}

For scalar $x$, the variance $\sigma^2(x)$ in terms of the gPC
expansions can be shown to be
\[
 \sigma^2  =  \Expected{x-\Expected{x}}^2 = \Expected{x^2} - \Expected{x}^2 = \mathbf{x}^T W \mathbf{x}- \Expected{x}^2,
  \]
where
\[
\begin{split}
& \Expected{x}  = \Expected{\sum_{i=0}^p x_i\phi_i}= \sum_{i=0}^p x_i \Expected{\phi_i} = \sum_{i=0}^p x_i \int_{\mathcal{D}_\Delta}\phi_i f d\Delta \\
& = \mathbf{x}^T F,
\end{split}
\]
and $F = \left[\begin{array}{cccc}1 \; 0 \; \cdots \;0 \end{array}\right]^{T}$. Therefore, $\sigma^2$ for scalar $x$ can be written in a compact form as
\begin{equation}
\sigma^2  = \mathbf{x}^T(W-FF^T)\mathbf{x}.
\end{equation}

In order to represent the covariance for $x\in\Real^{n}$, in terms of the gPC states, let us define 
$\Phi = [\phi_{0} \cdots \phi_{p+1}]^{T}$ and write $G = \int_{\mathcal{D}_{\Delta}}\Phi\Phi^{T}f d\Delta$. Let us represent a sub-vector of $\v{X}$, defined by elements $n_{1}$ to $n_{2}$, as $X_{n_{1}\cdots n{2}}$, where $n_{1}$ and $n_{2}$ are positive integers. Let us next define matrix  $M_{\v{X}}$, with subvectors of $\v{X}$, as $M_{\v{X}} = [\v{X}_{1\cdots n}\; \v{X}_{n+1 \cdots 2n} \; \cdots \v{X}_{np+1 \cdots n(p+1)}]$. For $x\in\Real^{n}$, it can be shown that 
\begin{equation}
\Expected{x} = (F\otimes I_{n})\v{X}, \label{eqn:mean}
\end{equation}
and the covariance can then be shown to be
\begin{equation}
\textbf{Cov}(x) = M_{\v{X}}GM_{\v{X}}^{T} - (F \otimes I_{n})\v{X}\v{X}^{T}(F^{T}\otimes I_{n}).
\label{eqn:cov}
\end{equation}

The trace of the covariance matrix $\textbf{Cov}(x)$ can then be written as
\[
\Trace{\textbf{Cov}(x)} = \v{X}^{T}((W-FF^{T})\otimes I_{n}) \v{X}.
\]
Therefore, a constraint of the type 
\[
\Trace{\textbf{Cov}(x(k))} \le \alpha_{\sigma^2}
\]
can be written in term of gPC states as
\begin{equation}
\v{X}^{T}Q_{\sigma^2} \v{X} \le \alpha_{\sigma^2},
\label{eqn:cov-constraint}
\end{equation}
where $Q_{\sigma^2}=(W-FF^T)\otimes I_{n}$. 

\section{Stability of the RHC Policy}
Here we show the stability properties of the receding horizon policy
when it is applied to the system in \eqnref{eq:pcrhcdyn}. Using gPC theory we can convert the underlying stochastic RHC formulation in $x(t,\Delta)$ and $u(t,\Delta)$ to a deterministic RHC formulation in $\v{X}(k)$ and $\v{U}(k)$. The stability of $\v{X}(k)$ in an RHC setting, with suitable terminal controller, can be proved using results by \cite{goodwin2005cca}, which shows that $\lim_{k\rightarrow \infty} \v{X}(k)\rightarrow 0$, when a receding horizon policy is employed. To relate this result to the stability of $x(k,\Delta)$, we first present the following known result in stochastic stability in terms of the moments of $x(k,\Delta)$. For stochastic dynamical systems in general, stability of moments is a weaker definition of stability than the \textit{almost sure stability} definition. However, the two definitions are equivalent for linear autonomous systems (pg. 296, \cite{Khasminski} also pg. 349 \cite{Chen}). Here we present the definition of asymptotic stability in the $p^{th}$ moment for discrete-time systems. 

\begin{definition}
The zero equilibrium state is said to be stable in the $p^{th}$ moment if $\forall \epsilon>0, \, \exists \delta > 0$ such that 
\begin{equation}
\sup_{k\geq0} \Expected{x(k,\Delta)^p} \leq \epsilon, \; \forall x(0,\Delta):||x(0,\Delta)||\leq\delta, \forall \Delta \in \mathcal{D}_{\Delta}.
\label{eqn:stab1}
\end{equation}
\end{definition}

\begin{definition}
The zero equilibrium state is said to be asymptotically stable in the $p^{th}$ moment if it is stable in $p^{th}$ moment and 
\begin{equation}
\lim_{k\rightarrow \infty} \Expected{x(k,\Delta)^p} = 0, \label{eqn:stab2}\end{equation}
for all $x(0,\Delta)$ in the neighbourhood of the zero equilibrium.
\end{definition}

\begin{proposition}
For the system in \eqnref{eqn:lti}, $\lim_{k\rightarrow \infty} \v{X}(k)\rightarrow 0$ is a sufficient condition for the asymptotic stability of the zero equilibrium state, in all moments.
\end{proposition}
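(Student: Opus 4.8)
The plan is to treat each moment of $x(k,\Delta)$ as a fixed polynomial in the gPC coefficient vector $\v{X}(k)$ and then push the convergence and boundedness of $\v{X}(k)$ through this polynomial map. Following the scalar reduction used to obtain \eqnref{eqn:mp}, write $\v{X}(k)=[x_0(k),\dots,x_p(k)]^T$; the vector case goes through componentwise. The first step is to read \eqnref{eqn:mp} as asserting that the $p^{th}$ moment $m_p(k)=\Expected{x(k,\Delta)^p}$ is a \emph{homogeneous polynomial of degree $p$} in the entries of $\v{X}(k)$, with constant coefficients $\int_{\mathcal{D}_\Delta}\phi_{i_1}\cdots\phi_{i_p}f\,d\Delta$. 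These coefficients are finite, since for every Askey family in Table~\ref{table.pc} the orthogonal polynomials have moments of all orders against their weight $f$. Bounding each factor $\abs{x_{i_j}(k)}$ by $\norm{\v{X}(k)}$ and summing the finitely many terms then yields a constant $C_p$, independent of $k$, with $\abs{m_p(k)}\le C_p\,\norm{\v{X}(k)}^p$.

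With this bound in hand the convergence clause \eqnref{eqn:stab2} is immediate: $\v{X}(k)\to 0$ forces $\norm{\v{X}(k)}\to 0$, hence $m_p(k)\to 0$ for every $p$. For the stability clause \eqnref{eqn:stab1} I would exploit that the closed loop \eqnref{eq:pcrhcdyn}, with the stabilizing terminal feedback active, is linear, so convergence of $\v{X}(k)$ to the origin sharpens to a uniform envelope $\sup_{k\ge0}\norm{\v{X}(k)}\le M\,\norm{\v{X}(0)}$. Relating the initial condition to its gPC coefficients through the projection defining \eqnref{eqn:gPC.x} gives $\norm{\v{X}(0)}\le c\,\sup_{\Delta}\norm{x(0,\Delta)}$ by Cauchy--Schwarz, so that $\sup_{k\ge0}\abs{m_p(k)}\le C_p M^p c^p\,\delta^p$ whenever $\norm{x(0,\Delta)}\le\delta$. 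Choosing $\delta$ small enough renders the right-hand side below any prescribed $\epsilon$, establishing stability in the $p^{th}$ moment; since both clauses hold for arbitrary $p$, the zero equilibrium is asymptotically stable in all moments.

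The step I expect to be the main obstacle is the stability clause, not the convergence clause. Convergence $\v{X}(k)\to 0$ for one initial condition does not on its own supply the uniform-in-initial-condition bound required by \eqnref{eqn:stab1}; producing the constant $M$ genuinely relies on linearity, i.e.\ on the closed-loop map being Schur so that $\norm{\v{X}(k)}$ admits an exponentially decaying envelope. A subsidiary technical point is guaranteeing that the mixed integrals $\int_{\mathcal{D}_\Delta}\phi_{i_1}\cdots\phi_{i_p}f\,d\Delta$ are finite uniformly over the index set for every $p$, which is exactly the place where the bare $\mathcal{L}_2$ hypothesis of the gPC framework must be upgraded to integrability of products of basis functions.
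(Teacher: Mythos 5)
Your core argument is exactly the paper's: the paper also reduces to scalar $x$, invokes \eqnref{eqn:mp} to express each moment $m_p(k)$ as a polynomial in the entries of $\v{X}(k)$ with constant coefficients, and concludes that $\v{X}(k)\rightarrow 0$ forces every $m_p(k)\rightarrow 0$, i.e.\ \eqnref{eqn:stab2}. Your quantitative bound $\abs{m_p(k)}\le C_p\norm{\v{X}(k)}^p$ is a cleaner way of saying the same thing. Where you genuinely diverge is that you also verify the stability clause \eqnref{eqn:stab1} of Definition~1, which the paper's proof silently skips: the paper proves only the limit condition and declares asymptotic stability, even though its own Definition~2 requires stability in the $p^{th}$ moment as a prerequisite. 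Your route to that clause --- upgrading pointwise convergence to a uniform envelope $\sup_k\norm{\v{X}(k)}\le M\norm{\v{X}(0)}$ via linearity/Schur-ness of the closed-loop gPC dynamics, then bounding $\norm{\v{X}(0)}$ by the initial condition --- is sound under the (reasonable, but unstated in the proposition) assumption that the hypothesis $\v{X}(k)\rightarrow 0$ holds for all initial conditions of the linear closed loop, since mere convergence along one trajectory cannot yield the $\epsilon$--$\delta$ uniformity that \eqnref{eqn:stab1} demands; you correctly flag this as the delicate step. Your subsidiary point about finiteness of the mixed integrals $\int_{\mathcal{D}_\Delta}\phi_{i_1}\cdots\phi_{i_p}f\,d\Delta$ is also a real (if minor) hypothesis that the paper leaves implicit; it holds for all the Askey families of Table~\ref{table.pc}. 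In short, your proposal subsumes the paper's proof and patches its principal gap.
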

\begin{proof}
To avoid tensor notation and without loss of generality, we consider $x(k,\Delta)\in \Real$ and let $\v{X}(k) = [x_0(k), x_1(k), \cdots, x_{p}(k)]^T$ be the gPC expansion of $x(k,\Delta)$. The moments in terms of $x_i(k)$ are given by eqn.(\ref{eqn:mp}).
Therefore, if $ \lim_{k\rightarrow \infty} \v{X}(k)\rightarrow 0$ $\implies \lim_{k\rightarrow \infty} x_i(k) \rightarrow 0$. Consequently, $\lim_{k\rightarrow \infty} m_i(k) \rightarrow 0$ for $i=1,2,\cdots$, and \eqnref{eqn:stab2} is satisfied.
This completes the proof. $\square$
\end{proof}

\section{Numerical Example}
Here we consider the following linear system, 
similar to that considered in \cite{primbs2009stochastic},  
\begin{equation}
x(k+1) = (A+G(\Delta))x(k)+Bu(k)
\end{equation}
where
\[
A=\left[\begin{array}{cc}1.02&-0.1\\.1&.98\end{array}\right],\, B=\left[
\begin{array}{c}0.1\\ 0.05\end{array}\right],\,
G = \left[\begin{array}{cc}0.04&0\\ 0&0.04\end{array}\right]\Delta.
\]
The system in consideration is open-loop unstable and the uncertainty appears
linearly in the $G$ matrix. Here, $\Delta \in [-1,1]$ and is governed
by a uniform distribution, that doesn't change with time. Consequently, Legendre polynomials is used for gPC approximation and polynomials up to $4^{th}$ order are used to formulate the control. Additionally, we assume that there is no uncertainty in the initial condition. The expectation based constraint is imposed on $x(k,\Delta))$ as
\[
\Expected{\;[1 \;\; 0 ] x(k,\Delta)\;}\ge -1,
\]
which in terms of the gPC states, this corresponds to
\[
\left[\begin{array}{cc}1&\v{0}_{1\times 2p+1}\end{array}\right]\v{X}(k)\ge -1.
\]

The terminal controller is designed using probabilistic LQR design techniques described by \cite{fisher2008sld}. The cost matrices used to determine the
terminal controller are
\[
Q = \left[\begin{array}{cc}2&0\\0&5\end{array}\right],\, R = 1.
\]

\Fig{fig:rhc} illustrates the performance of the proposed RHC policy The resulting optimization problem is a nonlinear programming problem which has been solved using MATLAB's \texttt{fmincon(...)} function. From the figure, we see that the constraint on the expected value of $x_1$ has been satisfied and the RHC algorithm was able to stabilize the system. These plots have been obtained using $4^{th}$ order gPC approximation of the stochastic dynamics.

\section{Summary}
In this paper we present a RHC strategy for linear discrete time systems with probabilistic system parameters. We have used the polynomial chaos framework to design stochastic RHC algorithms in an equivalent deterministic setting. The controller structure has an open loop component that controls the mean behavior of the system, and a state-feedback component that controls deviations about the mean trajectory. This controller structure results in a polynomial optimization problem with polynomial constraints that is solved in the general nonlinear programming framework. Theoretical guarantees for the stability of the proposed algorithm has also been presented. Performance of the RHC algorithm has been assessed using a two dimensional dynamical system. 

\begin{figure}[h!]
\includegraphics[width=0.45\textwidth]{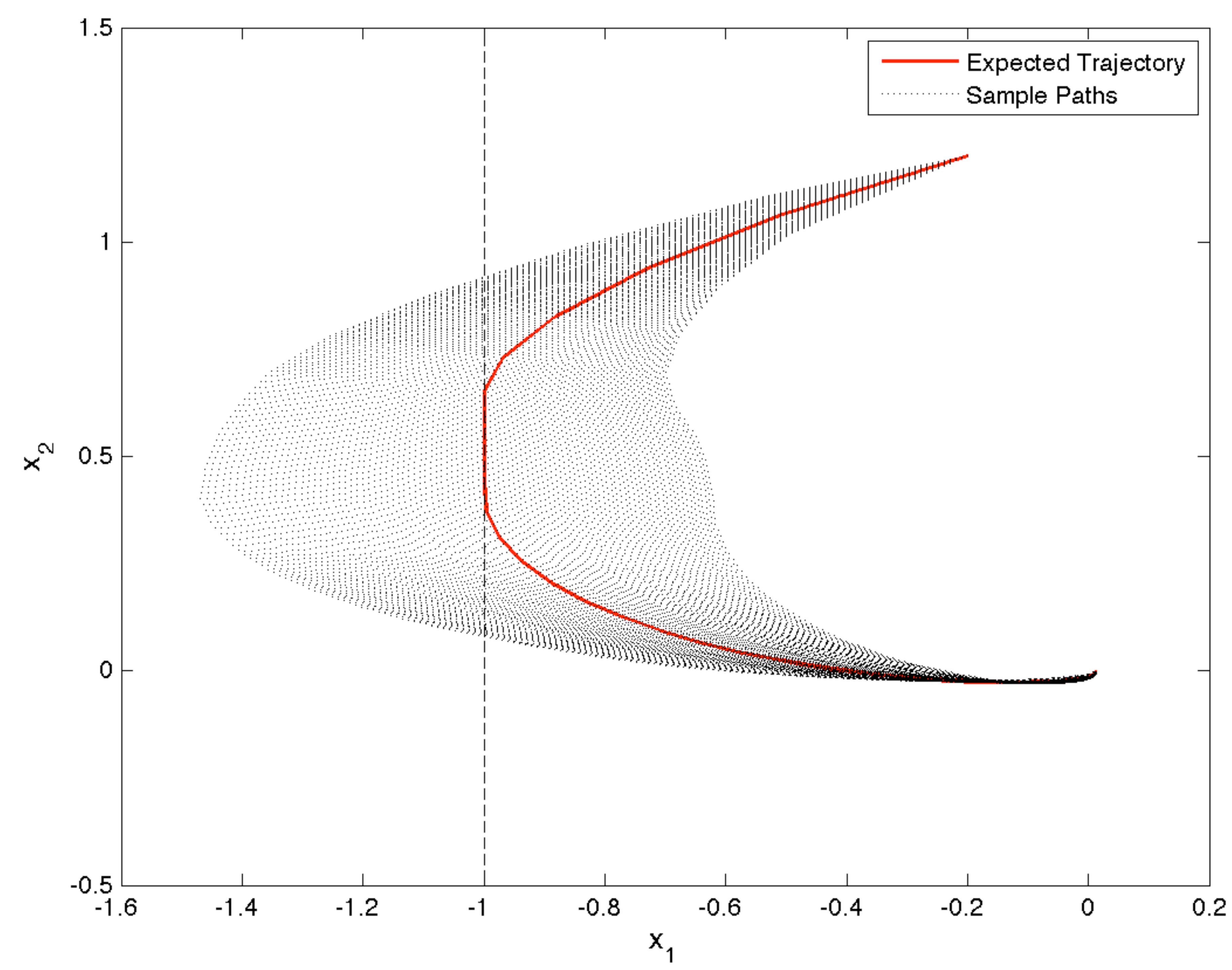}
\caption{State trajectories with expectation constraints.}\label{fig:rhc}
\end{figure}

\bibliographystyle{alpha}        
\bibliography{pc,rhc}

\begin{thebibliography}{37}
\providecommand{\natexlab}[1]{#1}
\providecommand{\url}[1]{\texttt{#1}}
\expandafter\ifx\csname urlstyle\endcsname\relax
  \providecommand{\doi}[1]{doi: #1}\else
  \providecommand{\doi}{doi: \begingroup \urlstyle{rm}\Url}\fi

\bibitem[Askey and Wilson(1985)]{Askey-Polynomials}
R.~Askey and J.~Wilson.
\newblock {Some basic hypergeometric polynomials that generalize jacobi
  polynomials}.
\newblock \emph{Memoirs Amer. Math. Soc.}, 319, 1985.

\bibitem[Batina et~al.(2002)Batina, Stoorvogel, and Weiland]{Batina}
I.~Batina, A.~A. Stoorvogel, and S.~Weiland.
\newblock {Optimal control of linear, stochastic systems with state and input
  constraints}.
\newblock \emph{Proceedings of the 41st IEEE Conference on Decision and
  Control}, 2:\penalty0 1564-- 1569, 2002.

\bibitem[Bemporad and Morari(1999)]{MPCSurvey}
A.~Bemporad and M.~Morari.
\newblock {Robust model predictive control: A survey}.
\newblock Technical report, Automatic Control Laboratory, Swiss Federal
  Institute of Technology (ETH), Physikstrasse 3, CH-8092 Z\"{u}rich,
  Switzerland, www.control.ethz.ch, 1999.

\bibitem[Bertsekas(2005)]{bertsekas2005dpa}
D.~Bertsekas.
\newblock {Dynamic programming and suboptimal control: A survey from ADP to
  MPC}.
\newblock \emph{European Journal of Control}, 11\penalty0 (4-5):\penalty0
  310--334, 2005.

\bibitem[Bhattacharya et~al.(2002)Bhattacharya, Balas, Kaya, and
  Packard]{raktimf16}
Raktim Bhattacharya, Gary~J. Balas, M.~Alpay Kaya, and Andy Packard.
\newblock Nonlinear receding horizon control of an f-16 aircraft.
\newblock \emph{Journal of Guidance, Control, and Dynamics}, 25\penalty0
  (5):\penalty0 924--931, 2002.

\bibitem[Bitmead et~al.(1990)Bitmead, Gevers, and Wertz]{rhcbook1}
R.R. Bitmead, M.~Gevers, and V.~Wertz.
\newblock \emph{Adaptive Optimal Control: The Thinking Man's GPC}.
\newblock International Series in Systems and Control Engineering. Prentice
  Hall, Englewood Cliffs, NJ, 1990.

\bibitem[Cameron and Martin(1947)]{CameronMartin}
R.~H. Cameron and W.~T. Martin.
\newblock {The orthogonal development of non-linear functionals in series of
  fourier-hermite functionals}.
\newblock \emph{The Annals of Mathematics}, 48\penalty0 (2):\penalty0 385--392,
  1947.

\bibitem[Cannon et~al.(2009)Cannon, Kouvaritakis, and Wu]{Cannon2009167}
M.~Cannon, B.~Kouvaritakis, and X.~Wu.
\newblock Model predictive control for systems with stochastic multiplicative
  uncertainty and probabilistic constraints.
\newblock \emph{Automatica}, 45\penalty0 (1):\penalty0 167 -- 172, 2009.

\bibitem[Chen and Hsu(1995)]{Chen}
By~G. Chen and S.~H. Hsu.
\newblock \emph{Linear Stochastic Control Systems}.
\newblock CRC Press, 1995.

\bibitem[Darlington et~al.(2000)Darlington, Pantelides, Rustem, and
  Tanyi]{darlington2000decreasing}
J.~Darlington, CC~Pantelides, B.~Rustem, and BA~Tanyi.
\newblock {Decreasing the sensitivity of open-loop optimal solutions in
  decision making under uncertainty}.
\newblock \emph{European Journal of Operational Research}, 121\penalty0
  (2):\penalty0 343--362, 2000.

\bibitem[de~la Penad et~al.(2005)de~la Penad, Bemporad, and
  Alamo]{delapenad2005spa}
DM~de~la Penad, A.~Bemporad, and T.~Alamo.
\newblock {Stochastic programming applied to model predictive control}.
\newblock In \emph{44th IEEE Conference on Decision and Control, 2005 and 2005
  European Control Conference. CDC-ECC'05}, pages 1361--1366, 2005.

\bibitem[Fisher and Bhattacharya(2008{\natexlab{a}})]{fisher2008sld}
J.~Fisher and R.~Bhattacharya.
\newblock {On stochastic LQR design and polynomial chaos}.
\newblock In \emph{American Control Conference, 2008}, pages 95--100,
  2008{\natexlab{a}}.

\bibitem[Fisher and Bhattacharya(2008{\natexlab{b}})]{pctrajgen}
J.~Fisher and R.~Bhattacharya.
\newblock {Optimal Trajectory Generation with Probabilistic System Uncertainty
  Using Polynomial Chaos}.
\newblock \emph{In Press Journal of Dynamic Systems, Measurement and Control},
  2008{\natexlab{b}}.

\bibitem[Fisher et~al.(2007)Fisher, Bhattacharya, and Vadali]{myrhc}
James Fisher, Raktim Bhattacharya, and S.~R. Vadali.
\newblock Spacecraft momentum management and attitude control using a receding
  horizon approach.
\newblock In \emph{Proceedings of the 2007 AIAA Guidance, Navigation, and
  Control Conference and Exhibit}, Hilton Head, SC, August 2007. AIAA.

\bibitem[Ghanem and Red-Horse(1999)]{pcSolids1}
Roger Ghanem and John Red-Horse.
\newblock {Propagation of probabilistic uncertainty in complex physical systems
  using a stochastic finite element approach}.
\newblock \emph{Phys. D}, 133\penalty0 (1-4):\penalty0 137--144, 1999.
\newblock ISSN 0167-2789.
\newblock \doi{http://dx.doi.org/10.1016/S0167-2789(99)00102-5}.

\bibitem[Ghanem and Spanos(1991)]{pcFEM}
Roger~G. Ghanem and Pol~D. Spanos.
\newblock \emph{Stochastic Finite Elements: A Spectral Approach}.
\newblock Springer-Verlag Inc., New York, NY, 1991.
\newblock ISBN 0-387-97456-3.

\bibitem[Goodwin et~al.(2005)Goodwin, Seron, and De~Dona]{goodwin2005cca}
G.C. Goodwin, M.~Seron, and J.~De~Dona.
\newblock \emph{{Constrained control and estimation: an optimisation
  approach}}.
\newblock Springer, 2005.

\bibitem[Grimm et~al.(2004)Grimm, Messina, Tuna, and Teel]{rhcnonrobust}
Gene Grimm, Michael~J. Messina, Sezai~E. Tuna, and Andrew~R. Teel.
\newblock Examples when nonlinear model predictive control is nonrobust.
\newblock \emph{Automatica}, 40:\penalty0 1729--1738, 2004.

\bibitem[Hou et~al.(2006)Hou, Luo, Rozovskii, and Zhou]{pcFluids2}
Thomas~Y. Hou, Wuan Luo, Boris Rozovskii, and Hao-Min Zhou.
\newblock {Wiener chaos expansions and numerical solutions of randomly forced
  equations of fluid mechanics}.
\newblock \emph{J. Comput. Phys.}, 216\penalty0 (2):\penalty0 687--706, 2006.
\newblock ISSN 0021-9991.
\newblock \doi{http://dx.doi.org/10.1016/j.jcp.2006.01.008}.

\bibitem[Jadbabaie et~al.(1999)Jadbabaie, Yu, and Hauser]{Ali_ACC99}
A.~Jadbabaie, J.~Yu, and J.~Hauser.
\newblock Stabilizing receding horizon control of nonlinear systems: A control
  lyapunov function approach.
\newblock \emph{Proceedings of the 1999 American Control Conference},
  3:\penalty0 1535--1539, 1999.

\bibitem[Khas'minskii(1969)]{Khasminski}
R.~Z. Khas'minskii.
\newblock \emph{Stability of Systems of Differential Equations in the Presence
  of Random Disturbances (in Russian)}.
\newblock Nauka, Moscow, 1969.

\bibitem[Kouvaritakis et~al.(2000)Kouvaritakis, Rossiter, and
  Schuurmans]{rhcefficient}
B.~Kouvaritakis, J.~A. Rossiter, and J.~Schuurmans.
\newblock Efficient robust predictive control.
\newblock \emph{IEEE Transactions on Automatic Control}, 45\penalty0
  (8):\penalty0 1545--1549, 2000.

\bibitem[Kwon(1994)]{kwon94}
W.H. Kwon.
\newblock Advances in predictive control: Theory and application.
\newblock \emph{1st Asian Control Conference, Tokyo}, 1994.

\bibitem[Lee and Yu(1997)]{rhcworstcase}
J.~H. Lee and Zhenghong Yu.
\newblock Worst-case formulations of model predictive control for systems with
  bounbded parameters.
\newblock \emph{Automatica}, 33\penalty0 (5):\penalty0 763--781, 1997.

\bibitem[Lee and Cooley(1998)]{lee1998ofc}
JH~Lee and BL~Cooley.
\newblock {Optimal feedback control strategies for state-space systems with
  stochastic parameters}.
\newblock \emph{IEEE Transactions on Automatic Control}, 43\penalty0
  (10):\penalty0 1469--1475, 1998.

\bibitem[Mayne et~al.(2000{\natexlab{a}})Mayne, Rawlings, Rao, and
  Scokaert]{NLRHC_survey}
D.~Mayne, J.~Rawlings, C.~Rao, and P.~Scokaert.
\newblock Constrained model predictive control, stability and optimality.
\newblock \emph{Automatica}, 36:\penalty0 789--814, 2000{\natexlab{a}}.

\bibitem[Mayne et~al.(2000{\natexlab{b}})Mayne, Rawlings, Rao, and
  Scokaert]{mayne2000cmp}
D.Q. Mayne, J.B. Rawlings, C.V. Rao, and PO~Scokaert.
\newblock {Constrained model predictive control: Stability and optimality}.
\newblock \emph{AUTOMATICA-OXFORD-}, 36:\penalty0 789--814, 2000{\natexlab{b}}.

\bibitem[Nagy and Braatz(2003)]{nagy2003robust}
Z.K. Nagy and R.D. Braatz.
\newblock {Robust nonlinear model predictive control of batch processes}.
\newblock \emph{AIChE Journal}, 49\penalty0 (7):\penalty0 1776--1786, 2003.

\bibitem[Prabhakar et~al.(2008)Prabhakar, Fisher, and Bhattacharya]{vinh-JGCD}
A.~Prabhakar, J.~Fisher, and R.~Bhattacharya.
\newblock {Polynomial Chaos Based Analysis of Probabilistic Uncertainty in
  Hypersonic Flight Dynamics}.
\newblock \emph{submitted AIAA Journal of Guidance, Control, and Dynamics},
  2008.

\bibitem[Primbs(1999)]{Primbs-Thesis}
J.A. Primbs.
\newblock \emph{Nonlinear Optimal Control: A Receding Horizon Approach}.
\newblock PhD thesis, California Institute of Technology, Pasadena, CA, 1999.

\bibitem[Primbs and Sung(2009)]{primbs2009stochastic}
J.A. Primbs and C.H. Sung.
\newblock Stochastic receding horizon control of constrained linear systems
  with state and control multiplicative noise.
\newblock \emph{Automatic Control, IEEE Transactions on}, 54\penalty0
  (2):\penalty0 221--230, 2009.

\bibitem[Qin and Badgwell(1996)]{MPC_Overview}
S.J. Qin and T.~Badgwell.
\newblock An overview of industrial model predictive control technology.
\newblock \emph{AIChE Symposium Series}, 93:\penalty0 232--256, 1996.

\bibitem[Rakovi\'{c} et~al.(2006)Rakovi\'{c}, Teel, Mayne, and
  Astolfi]{rakovic}
S.~V. Rakovi\'{c}, A.~R. Teel, D.~Q. Mayne, and A.~Astolfi.
\newblock Simple robust control invariant tubes for some classes of nonlinear
  discrete time systems.
\newblock In \emph{Proceedings of the 45th IEEE Conference on Decision and
  Control}, pages 6397--6402, San Diego, CA, December 2006. IEEE.

\bibitem[Schetzen(2006)]{volterra}
M.~Schetzen.
\newblock \emph{The Volterra and Wiener Theories of Nonlinear Systems}.
\newblock Krieger Pub., 2006.

\bibitem[van Hessem and Bosgra(2002)]{hessem1}
D.~H. van Hessem and O.~H. Bosgra.
\newblock A conic reformulation of model predictive control including bounded
  and stochastic disturbances and input constraints.
\newblock In \emph{Proceedings of the 2002 Conference on Decision and Control},
  volume~4, pages 4643--4648, Las Vegas, NV, December 2002. IEEE.

\bibitem[Wiener(1938)]{wienerPC}
N.~Wiener.
\newblock {The homogeneous chaos}.
\newblock \emph{American Journal of Mathematics}, 60\penalty0 (4):\penalty0
  897--936, 1938.

\bibitem[Xiu and Karniadakis(2002)]{Xiu}
Dongbin Xiu and George~Em Karniadakis.
\newblock {The wiener--askey polynomial chaos for stochastic differential
  equations}.
\newblock \emph{SIAM J. Sci. Comput.}, 24\penalty0 (2):\penalty0 619--644,
  2002.
\newblock ISSN 1064-8275.

\end{thebibliography}

\end{document}